\documentclass[a4paper,11pt]{amsart}
\usepackage{amsmath,inputenc,euscript,amssymb,geometry}
\geometry{letterpaper} 
\usepackage{graphicx}
\usepackage{amssymb}
\usepackage{latexsym}
\usepackage{amssymb,amsbsy,amsmath,amsfonts,amssymb,amscd}
\usepackage{hyperref}
\usepackage{xcolor}
\newcommand{\D}{\displaystyle}
\newtheorem{lemma}{Lemma}[section]
\newtheorem{theorem}[lemma]{Theorem}

\newtheorem{remark}[lemma]{Remark}

\begin{document}
\title[]{Singularity formation for the 1-D cubic NLS\\ and the Schr\"odinger map on $\mathbb S^2$}
\author[V. Banica]{Valeria Banica}
\address[V. Banica]{Laboratoire de Math\'ematiques et Mod\'elisation d'\'Evry (UMR 8071)\\D\'epartement de Math\'ematiques\\ Universit\'e d'\'Evry, 23 Bd. de France, 91037 \'Evry\\ France, Valeria.Banica@univ-evry.fr} 

\author[L. Vega]{Luis Vega}
\address[L. Vega]{Departamento de Matem\'aticas, Universidad del Pais Vasco, Aptdo. 644, 48080 Bilbao, Spain, luis.vega@ehu.es,
\hfill\break\indent \and
BCAM Alameda Mazarredo 14, 48009 Bilbao, Spain, lvega@bcamath.org} 

\maketitle
{\it To Vladimir Georgiev, for his 60$^{th}$ birthday.}
\begin{abstract}
In this note we consider the 1-D cubic Schr\"odinger equation with data given as small perturbations of a Dirac-$\delta$ function and some other related equations. We first recall that although the problem for this type of data is ill-posed one can use the geometric framework of the Schr\"odinger map to define the solution beyond the singularity time. Then, we find some natural and well defined geometric quantities that are not regular at time zero. Finally, we make a link between these results and some known phenomena in fluid mechanics that inspired this note.
\end{abstract}

\section{Introduction}

\subsection{Low regularity issues for the 1-D cubic Schr\"odinger equation}
We consider the 1-D cubic Schr\"odinger equation
\begin{equation}\label{NLS0}
\left\{\begin{array}{cc}i\psi_t+\psi_{xx}\pm|\psi|^2\psi=0,\\\psi(0)=\psi_0.\end{array}\right.
\end{equation}

The regularity threshold in Sobolev spaces for the Cauchy problem is $L^2$. More precisely, the equation is known to be well-posed in $H^{s}$, for $s\geq 0$ \cite{GV,CW} and if $s<0$ the equation is ill-posed in $H^{s}$ \cite{KPV,CCT}. Note however that the threshold obtained by the rescaled $\lambda \psi(\lambda^2t,\lambda x)$ solutions is $\dot H^{-\frac 12}$. Well-posedness was then proved to hold in  \cite{VaV,Gr} for data whose Fourier transform is in some $L^p$ spaces with $p<\infty$. From this point of view a natural space would be to consider  initial data with Fourier transform in $L^\infty$, 
\begin{equation}\label{spaceF}
\widehat{\psi_0}\in L^\infty,
\end{equation}
as this space is scaling invariant. In \cite{C} a result about the existence of a solution that belongs to almost this class is proved. More recently, several papers have appeared on uniform estimates in the Sobolev class, see \cite{KM} and \cite {KT} (and also \cite{KVZ}).

These results miss the case $\delta_{x=0}$, critical for the scaling, which is an important example for several reasons. One of them is that is also invariant under Galilean transformations, property that turns out to be crucial when \eqref{NLS0} is obtained from the equation of the binormal flow (see \eqref{BF}) thanks to the so-called Hasimoto transformation. By doing so, the gauge invariance of the geometric PDE has to be considered in \eqref{NLS0}. This amounts to study the more general equation
\begin{equation}\label{NLS00}
\left\{\begin{array}{cc}i\psi_t+\psi_{xx}\pm(|\psi|^2- A(t))\psi=0,\qquad A(t)\in \Bbb R,\\\psi(0)=\psi_0.\end{array}\right.
\end{equation}

At the level of Dirac measures as initial condition,  we have an explicit ill-posedness result of \eqref{NLS00} obtained first in \cite{BV1} and then improved in \cite{BV2} as follows. 
Let $a>0$ be small and let $0<\gamma<\frac 14$ and $ 0<\delta$. Consider the cubic Schr\"odinger equation
\begin{equation}\label{NLS}
i\psi_t+\psi_{xx}\pm(|\psi|^2-\frac {a^2}{t})\frac{\psi}{2}=0.
\end{equation}
First notice that the function
$$\psi_a(t,x)=a\frac{e^{i\frac{x^2}{4t}}}{\sqrt t}$$
is a solution with  $a\delta_0$ as initial data at time zero. Take $s\in \mathbb N$ and $0<\gamma<\frac 14$. 
We consider at time $t=1$ a perturbation $u(1)$ of $\psi_a(1)$, with $\partial_x^k u(1)$ small in the space 
\begin{equation}\label{Xtau}
X^\gamma=\{ f, \|f\|_{L^2}+\||\xi|^{2\gamma}\hat{f}(\xi)\|_{L^\infty(\xi^2\leq 1)}<+\infty\},
\end{equation}
for all $0\leq k\leq s$. 
Then, it is proved that \eqref{NLS} with $\psi_a(1)+u(1)$ as initial data at $t=1$ has a unique solution on $(0,1]$ that writes
\begin{equation}\label{psiu}\psi(t,x)=\frac{e^{i\frac{x^2}{4t}}}{\sqrt t}(a+\overline u)\left(\frac 1t,\frac xt\right),\end{equation}
with $\partial_x^k u(1)$ small in the space  
\begin{equation}\label{Ytau}
Y^{\gamma,\delta}=\left\{g, \,\sup_{t\geq 1}\,\left(\|g(t)\|_{L^2}+\frac{1}{t^\delta}\||\xi|^{2\gamma}\hat{g}(t,\xi)\|_{L^\infty(\xi^2\leq 1)}\right)<+\infty\right\}\cap L^4((1,\infty)L^\infty),
\end{equation}
for all $0\leq k\leq s$. 
Moreover, $u(t)$ scatters, in the sense that there exists a final state $f_+\in H^s$ such that $u(t)$ behaves for large time as $e^{\pm i a^2\log \sqrt{t}}e^{i(t-1)\partial_x^2}f_+$ in $L^\infty((t,\infty)H^s)$ norm.  In particular we obtain, imposing moreover that $u(1)$ belongs to weighted spaces, that
$$\left\|\psi(t,x)-a\frac{e^{i\frac{x^2}{4t}}}{\sqrt{t}}-\frac{e^{\pm ia^2\log \sqrt{t}}}{\sqrt{4\pi i}}\hat {\overline{f}}_+\left(-\frac x2\right)\right\|_{L^2}\leq C t^\frac 14.$$
This shows that as $t$ goes to zero the perturbative solution $\psi(t)$ behaves like $a\frac{e^{i\frac{x^2}{4t}}}{\sqrt t}$, that goes to $a\delta$ as $t$ approaches zero, and moreover that there is no limit in $L^2$ for $\psi(t)-a\frac{e^{i\frac{x^2}{4t}}}{\sqrt t}$. \\

\subsection{Continuation after the singularity is formed using the geometric structure}\label{sectgeom}
While at the Schr\"odinger level things are out of control at time zero as we have just seen, there is a way of having a new insight thanks to the geometric structure of the equation. 
In fact, the instability phenomena have been removed at the level of the Schr\"odinger map in \cite{BV4}  and in the following sense. 

Starting from $\psi$ the solution of \eqref{NLS} obtained above with the focusing non-linearity (i.e. take the $+$ sign in \eqref{NLS}), we construct a map $T:(0,1]\times\mathbb R\rightarrow \mathbb S^2$, together with $N:(0,1]\times\mathbb R\rightarrow \mathbb S^2+i\mathbb S^2$ by imposing $(T,\Re N,\Im N)(1,0)$ to be an orthonormal basis of $\mathbb R^3$, and the space and time evolution laws to be
\begin{equation}\label{syst}
\left\{\begin{array}{c}T_x(t,x)=\Re\overline{\psi}N(t,x),\\N_x(t,x)=-\psi T(t,x),\\
T_t=\Im\overline{\psi_x}N,\\
N_t=-i\psi_x T-i\,\frac{a^2-t|\psi|^2}{2t}\, N.
\end{array}\right.
\end{equation}
Then, it turns out that $T$ satisfies for $t\in (0,1]$ the equation of the Schr\"odinger map onto $\mathbb S^2$:
\begin{equation}\label{schmap}
T_t=T\land T_{xx}.
\end{equation}
Moreover, as $t$ goes to zero the vector $T(t,x)$ has a pointwise limit:
$$\exists T(0,x)= \underset{t\rightarrow 0}\lim T(t,x), \forall x\neq 0.$$
In the same way the modulated normal vector
\begin{equation}\label{deftildeN} \tilde N(t,x)=N(t,x)e^{i\Phi(t,x)}\,\,\,,\,\,\, \Phi(t,x)=a^2\log\frac{ |x|}{\sqrt{t}},\end{equation}
also has a pointwise limit as $t$ goes to zero 
$$\exists \tilde N(0,x)=\underset{t\rightarrow 0}\lim \tilde N(t,x), \forall x\neq 0.$$
So that at $t=0$ the limits $T(0)$ and $\tilde N(0)$ satisfy the following systems:
\begin{equation}\label{TN0}\left\{\begin{array}{c}T_x(0,x)=\Re\left(\widehat{f_+}\left(\frac x2\right)\,e^{-ia^2\log |x|}\,\tilde N(0,x)\right),\\ \,\\
\tilde N_x(0,x)=-\overline{\widehat{f_+}\left(\frac x2\right)\,e^{-ia^2\log |x|}}\,T(0,x),
\end{array}\right.\end{equation}
for $x\in(0,\pm\infty)$, with the values of $x=0^\pm$ given by a couple of vectors that depend on $a$ in a precise way . 

Finally, we proved in \cite{BV4} that there is a unique (in the sense that at the level of \eqref{NLS} the perturbations $u$ live in the $Y^{\gamma,\delta}$ spaces) nontrivial way of continuing the solution $T$ of \eqref{schmap} after time $t=0$. More precisely, having in mind the time invariance of the Schr\"odinger map \eqref{schmap}, it is enough to construct solutions $T^*$ of \eqref{schmap} for positive times, with initial data $T^*(0,x)=-T(0,-x)$. It turns out  that $T(0,0^+)\neq T(0,0^-)$, so that $T^*(0)$ is not a standard initial data for the Schr\"odinger map \eqref{schmap}. The first step to overcome this difficulty is to construct $(T^*,\tilde N^*)$ at time $t=0$ as solutions of \eqref{TN0}. Then, from the function appearing in this system to construct a new final state $f_+^*$, and thanks to the existence of the wave operators for the equation that $ u$ given in \eqref{psiu} satisfies, to construct a new function $u^*(t)$ on $(1,\infty)$ with this final state, yielding for $t\in(0,1]$ a new solution $\psi^*(t)$. Finally, via \eqref{syst} a frame $(T^*,\tilde N^*)$ is obtained for $t\in(0,1]$, with $T^*$ solution of \eqref{schmap}, a trace of it is obtained at time zero, which by a rigidity argument it is shown to be exactly $T^*(0)$. \\

\subsection{Lack of smoothness of some natural quantities associated to the geometric solution}
In this section we will exhibit some natural quantities associated to the solution of the Schr\"odinger map, whose construction we sketched in the previous section, that are non-smooth in time. 
More precisely, using the previous notations, we prove the following result.

\begin{theorem}\label{th}In view of the equations \eqref{syst} and the natural space \eqref{spaceF} we consider the quantities $\widehat{T_x}(t)$ and $\widehat{N_x}(t)$. The following strict inequalities hold:
\begin{equation}\label{Tdisc}
\underset{0<t\leq 1}{\inf}\|\widehat{T_x}(t)\|_{L^\infty}> \|\widehat{T_x}(0)\|_{L^\infty},
\end{equation}
and
\begin{equation}\label{Ndisc}
\underset{0<t\leq 1}{\inf}\|\widehat{N_x}(t)\|_{L^\infty}> \|\widehat{\tilde N_x}(0)\|_{L^\infty}.
\end{equation}
\end{theorem}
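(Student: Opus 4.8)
The plan is to reduce both \eqref{Tdisc} and \eqref{Ndisc} to a single statement about the self-similar profile, and then to extract a strictly positive gap from the fact that the curvature does not decay. From \eqref{syst} and the orthonormality of $(T,\Re N,\Im N)$ one has the pointwise identities $|T_x|=|N_x|=|\psi|$, and by \eqref{psiu} the curvature $|\psi(t,x)|$ is of size $a/\sqrt t$ and, crucially, does \emph{not} decay as $|x|\to\infty$. I would first isolate the leading self-similar part of the frame: writing $\eta=x/\sqrt t$, and using \eqref{deftildeN} so that $\Phi=a^2\log|\eta|$, the dominant contribution as $t\to0$ is self-similar, $T(t,x)=G(\eta)$ and $\tilde N(t,x)=\tilde N(\eta)$. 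Substituting $\psi_a=a\,e^{i\eta^2/4}/\sqrt t$ into \eqref{syst} gives $T_x(t,x)=t^{-1/2}G'(\eta)$ and $N_x(t,x)=t^{-1/2}h(\eta)$ with $G'(\eta)=a\,\Re\big(e^{-i\eta^2/4-ia^2\log|\eta|}\tilde N(\eta)\big)$ and $h(\eta)=-a\,e^{i\eta^2/4}G(\eta)$. The change of variables $x\mapsto\eta$ then yields the exact dilation law $\widehat{T_x}(t,\xi)=\widehat{G'}(\sqrt t\,\xi)$ and $\widehat{N_x}(t,\xi)=\hat h(\sqrt t\,\xi)$, so that $\|\widehat{T_x}(t)\|_{L^\infty}=\|\widehat{G'}\|_{L^\infty}$ and $\|\widehat{N_x}(t)\|_{L^\infty}=\|\hat h\|_{L^\infty}$ are \emph{independent of}~$t$. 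Hence the two infima in \eqref{Tdisc}--\eqref{Ndisc} are, to leading order, the $L^\infty$ norms of $\widehat{G'}$ and $\hat h$.

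Next I would compare with the traces at $t=0$. By \eqref{TN0} the classical derivatives $T_x(0,\cdot)$ and $\tilde N_x(0,\cdot)$ lie in $L^1$, because $\widehat{f_+}$ decays; thus $\widehat{T_x}(0,\cdot)$ and $\widehat{\tilde N_x}(0,\cdot)$ are continuous and, by Riemann--Lebesgue, converge as $|\xi|\to\infty$ to the jumps $[T]_0=T(0,0^+)-T(0,0^-)$ and $[\tilde N]_0$, whose lengths are the explicit functions of $a$ produced by the self-similar analysis. On the other hand $\widehat{T_x}(t,0)=\int T_x(t,x)\,dx$ equals $[T]_0$ at the level of the self-similar profile, so the value at the origin agrees, to leading order, on both sides of \eqref{Tdisc}. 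Consequently \eqref{Tdisc} is equivalent to the strict statement $\sup_\xi|\widehat{G'}(\xi)|>|\widehat{G'}(0)|$, i.e. that the profile transform attains values strictly above its value at the origin. For \eqref{Ndisc} there is an extra feature: since $N=\tilde N\,e^{-i\Phi}$ with $\Phi\to\infty$, the quantity $\hat h(0)=N(t,+\infty)-N(t,-\infty)$ is only conditionally convergent and its modulus is strictly smaller than $|[\tilde N]_0|$, the logarithmic modulation of \eqref{deftildeN} being present for $t>0$ but absent in $\tilde N(0)$.

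The heart of the matter, and the step I expect to be the main obstacle, is the strict inequality $\sup_\xi|\widehat{G'}(\xi)|>|\widehat{G'}(0)|$ (and its analogue for $\hat h$). Here the non-decay of the curvature is decisive: since $|G'|\equiv a$, one has $\eta\,G'\notin L^1$, so $\widehat{G'}$ fails to be $C^1$ at $\xi=0$ and carries a genuine cusp there. I would make this quantitative by a stationary-phase analysis of $\widehat{G'}(\xi)=\int G'(\eta)e^{-i\eta\xi}\,d\eta$, whose phase is $\mp(\eta^2/4+a^2\log|\eta|)-\eta\xi$ with stationary points $\eta_*=\mp2\xi$: for $\xi\neq0$ these avoid the logarithmic singularity at the origin and produce a Fresnel contribution that combines with the mean value $[T]_0$ so as to raise the modulus strictly above $|\widehat{G'}(0)|$ on a suitable set of small $\xi\neq0$. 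The same computation, now with the modulation $\Phi$ retained, yields the gap for $\hat h$ against $|[\tilde N]_0|$.

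Two points make this delicate. First, since both sides are comparable to $|[T]_0|$, the gap is a lower-order effect in $a$, so one must determine the correct \emph{sign} of the cusp; this requires the sharp stationary-phase constants together with the precise values of $\tilde N(\pm\infty)$ coming from the self-similar asymptotics. Second, one must check that the scattering remainder in \eqref{psiu}, which is $Y^{\gamma,\delta}$-small by \eqref{Ytau}, perturbs both sides by less than this gap, so that the strict inequalities persist for the actual solution and uniformly in $t\in(0,1]$.
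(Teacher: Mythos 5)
Your overall picture is the right one --- the lower bound for $t>0$ should come from the non-decaying curvature $|\psi|\sim a/\sqrt t$ via a Fresnel-type oscillatory integral, while the $t=0$ side is governed by the Dirac mass carried by the jump of $T$ at the corner --- and this matches the skeleton of the paper's argument. But the step you yourself identify as ``the heart of the matter'' is missing, and the mechanism you propose for it points in the wrong place. You want to beat $|\widehat{G'}(0)|=|A_a^+-A_a^-|$ at \emph{small} $\xi\neq 0$ by a cusp of favorable sign at the origin. First, for $|\xi|$ small the full phase $\eta^2/4+a^2\log|\eta|+\eta\xi$ has no real critical point (one needs $\xi^2\geq 2a^2$), so there is no clean Fresnel contribution there; second, any gain near $\xi=0$ is $o(1)$ as $\xi\to 0$ and therefore cannot be made uniform against the scattering remainder, whose Fourier contribution at a \emph{fixed finite} frequency is not controlled by the $Y^{\gamma,\delta}$ norms; third, you concede you cannot determine the sign of the cusp. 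So the strict inequality is not established.

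The paper closes this gap by going to the opposite regime $\xi\to-\infty$. There the stationary point $x\approx -2t\xi$ escapes to $+\infty$, all the perturbative terms ($u$, $\tilde N-N^{+\infty}$, and their product) vanish in the limit (Lemma \ref{oscint} i)--ii)), and the orthogonality $\Re N^{+\infty}\perp\Im N^{+\infty}$ forces
$\lim_{\xi\to-\infty}|\widehat{T_x}(t,\xi)|=2\sqrt{\pi}\,a$ \emph{exactly}, for every fixed $t\in(0,1]$ --- which gives the $t$-uniform lower bound directly, without relying on the exact dilation law (which, as you note, is broken by the perturbation). The strict gap is then the elementary inequality $\pi a^2>1-e^{-\pi a^2}$, i.e. $2\sqrt{\pi}a>|A_a^+-A_a^-|=2\sqrt{1-e^{-\pi a^2}}$: a fixed positive quantity depending only on $a$, against which the $O(\|f_+\|_{H^1})$ error in $\|\widehat{T_x}(0)\|_{L^\infty}$ (and likewise $\|\widehat{\tilde N_x}(0)\|_{L^\infty}$, using $|B_a^+-B_a^-|=2|B_{a,1}|$ and $T_1^2+|N_1|^2=1$) can be absorbed. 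If you replace your small-$\xi$ cusp analysis by this large-$\xi$ computation and the explicit formula for $|A_a^+-A_a^-|$ from \cite{GRV}, your argument becomes the paper's.
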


\subsection{Link with fluid mechanics phenomena} 
The present note was inspired from the following problem. We consider the binormal flow equation
\begin{equation}\label{BF}
\chi_t=\chi_x\wedge \chi_{xx},
\end{equation}
where $\chi(t)$ is a curve in $\mathbb R^3$ parametrized by the arclength parameter $x$ varying in $\mathbb R$ and in $[0,2\pi]$ for closed curves. This equation was derived by Da Rios in 1905 as a model for the evolution of a vortex filament in a 3-D fluid governed by Euler equations (\cite{DaR}). It is the simplest and the most used model for this kind of dynamics. We refer to the recent article \cite{JSe} about the validity of this approximation. 

Recent numerical simulations (\cite{DHV},\cite{JS})  done for the evolution \eqref{BF} taking polygons as initial condition, suggest some striking similarities with some phenomena observed for non-circular jets (see for instance Figure 6 of \cite{GG} and Figure 10 of \cite{GGP}). At the qualitative level two relevant facts are observed: 
\begin{itemize}
\item axis switching phenomena occurs,
\item  symmetries that are a multiple of the starting symmetry appear. 
\end{itemize}

Furthermore, in \cite{DHV} the authors show that these phenomena also appear in  \eqref{BF}. More precisely, if  the initial data $\chi(0)$ is a regular planar polygon with $M$ sides, supposing that a unique solution $\chi(t)$ exists, and integrating the Frenet system, it is proved that $\chi(t)$  at times 
$$t_{p,q}=(2\pi/M^2)p/q,$$
is a skew-polygon with a number of sides that is either $Mq$ or $Mq/2$. In particular in half a period (i.e. $t_{p,q}=\pi/M^2$) a regular planar  M-polygon reappears with the axis switched by an angle $2\pi/M$. This effect is a non--linear version of the so-called Talbot effect, (see  \cite{O}, \cite{ET}, and \cite{Ve} for the Talbot effect in non-linear settings ). 

Some more recent numerical simulations \cite{DHV2} show that the dynamics at time $0^+$ of any of the corners of the initial regular polygon is the one of the self-similar solution of \eqref{BF} that is determined by the angle and location of the corner. As a consequence, the dynamics at $0^+$ can be understood as the non-linear interaction of infinitely many filaments (as $q$ goes to infinity), one for each corner, that for infinitesimal times each resembles the one of the self-similar solution of \eqref{BF} studied in \cite{GRV}. We recall that the self-similar solutions of the binormal flow \eqref{BF} form a family of solutions $(\chi_a)_{a\in(0,\infty)}$ with a smooth profile at time $t=1$, and a limit at time $t=0$ that is precisely a corner described by $\chi(0,x)=xA_a^\pm$ for $x\in(0,\pm\infty)$, with $A^\pm$ unitary vectors determined in terms of the parameter $a$. 

Therefore, it seems rather natural to know up to what extent some quantities associated to the solutions of \eqref{BF} behave in a non-linear way. In this paper we start this study in the more accessible situation of the real line and more concretely we look at the solutions, obtained in our previous work \cite{BV4}, that are small perturbations of the self-similar solutions. We will mainly focus in the following aspects, related to fluid mechanics phenomena:
\begin{itemize}
  \item [(i)] Continuation after the singularity has been formed.
  \item [(ii)] Behavior of some conservation laws.
  \item [(iii)] Transfer of energy: Lack of continuity of some appropriate norm.
\end{itemize}

The first question was already answered in \cite{BV4}, and the continuation process has been already described, at the level of the tangent vector, in \S \ref{sectgeom}. 

For the second question we start making a remark on the so-called fluid impulse. We first recall, see \cite{MB} p. 24, that for a 3-D fluid governed by Euler equations, of vorticity $\omega$ regular and decaying at infinity, the fluid impulse 
$$\int_{\mathbb R^3}x\wedge\omega(t,x)dx$$
is conserved in time. In the case of the self-similar solutions $\chi_a$ of \eqref{BF}, as the vorticity is supposed to be concentrated along the curve $\chi_a(t)$, the corresponding quantity is the so-called linear momentum \cite{R}
$$\int_{\mathbb R} \chi_a(t,x)\wedge (\chi_a)_x(t,x)dx.$$
It is worth noting that explicit computations (see the Appendix) give
$$\int_{\mathbb R} \chi_a(t,x)\wedge (\chi_a)_x(t,x)dx=|t|(A_a^+-A_a^-).$$
In particular the linear momentum is not conserved, due of course to the behavior of the filament at  infinite, so that its modulus decays and eventually vanishes at the singularity formation time, and then grows again instantaneously after passing $t=0$. This means that close to a corner there is a neat transfer of linear momentum. This fact has been confirmed in the numerical experiments done in \cite{DHV2} for regular polygons. Also, and due to the fact that the number of corners depends on the rationality of the time, this local transfer of  momentum has a characteristic intermittent behavior.

For treating question (iii) we have to establish the functional spaces that are appropriate for our purposes. Theorem \ref{th} is an answer in this direction.

\section{Proof of Theorem \ref{th}}

\begin{proof}
We recall from \S 3.3 of \cite{BV4} that the system \eqref{TN0} for $x\in(0,\pm\infty)$, has initial data
\begin{equation}\label{TN0id}T(0,0^\pm)=RA_a^\pm\quad,\quad \tilde N(0,0^\pm)=RB_a^\pm,\end{equation}
for some rotation $R$ and $A_a^\pm,B_a^\pm$ are determined by the self-similar profiles of the binormal flow \eqref{BF}. Therefore
$$T_x(0,x)=R(A_a^--A_a^+)\delta_0+\Re\left(\hat f_+(\frac x2)e^{-ia^2\log |x|}\tilde N(0,x)\right),$$
and we have  
$$\left|\widehat{T_x}(0,\xi)-R(A_a^--A_a^+)\right|\leq C\int|\hat f_+(\frac x2)| dx\leq C\|f_+\|_{H^1}.$$
Since from \cite{GRV} we know that 
$$|A_a^--A_a^+|^2=4(1-A_{a,1}^2)=4(1-e^{-\pi a^2}),$$
we get
$$\left| |\widehat{T_x}(0,\xi)|-2\sqrt{1-e^{-\pi a^2}}\right|\leq C\|f_+\|_{H^1}.$$
As $\pi a^2 >1-e^{-\pi a^2}$, if $\|f_+\|_{H^1}$ is small enough, that in turn is obtained if $u(1)$ is small enough, we obtain 
\begin{equation}\label{estT0}2\sqrt{\pi}a> \|\widehat{T_x}(0)\|_{L^\infty}.\end{equation}

We shall get the lower bound $2\sqrt{\pi}a$ for $\|\widehat{T_x}(t)\|_{L^\infty}$ by looking at large frequencies.   From \eqref{syst} we have 
$$\widehat{T_x}(t,\xi)=\widehat {\Re\overline\psi N}(t,\xi).$$
We recall now from \cite{BV4} that the modulated vectors defined in \eqref{deftildeN} have a limit at $x$ large, independent of time:
$$\exists \lim_{x\rightarrow\pm\infty} \tilde N(t,x)=N^{\pm\infty},\,\Re N^{\pm\infty},\Im N^{\pm\infty}\in \mathbb S^2.$$
By using also the link \eqref{psiu} between $\psi$ and $u$, we can then write
\begin{equation}\label{That}\widehat{T_x}(t,\xi)=\int e^{-ix\xi}\,\Re\left(\frac{e^{-i\frac{x^2}{4t}}}{\sqrt t}(a+ u)\left(\frac 1t,\frac xt\right) e^{-i\phi(t,x)}\,\left(N^{+\infty} -g_N(t,x)\right)\right) dx,\end{equation}
with the function $g_N(t,x)$ defined by $g_N(t,x):=N^{+\infty}-\tilde{N}(t,x)$ satisfying
\begin{equation}\label{gas}g_N(t)\in L^\infty,\,g_N(t,x)\overset{x\rightarrow +\infty}{\longrightarrow}0.\end{equation}

The leading term in \eqref{That} is the same one as for the self-similar solutions, with $N^{+\infty}$ instead of $B^+$, so computations on it go the same. Indeed, from Lemma  \ref{oscint} iii) we prove below we get
$$\underset{|\xi|\rightarrow \infty}{\lim}\left|\int e^{-ix\xi}\,\Re\left(\frac{e^{-i\frac{x^2}{4t}}}{\sqrt t} ae^{-ia^2\log\frac{ |x|}{\sqrt{t}}}N^{+\infty}\right)dx-2\sqrt{\pi}a\Re \left(e^{i\xi^2 t-ia^2\log 2|\xi|\sqrt{t}-i\frac \pi 4}N^{+\infty}\right)\right|=0.$$
Then we notice the following orthogonality relation. By construction $\Re N(t,x) \perp \Im N(t,x)$, that writes
$$\Re\tilde{N}(t,x) e^{-i\phi(t,x)} \perp Im\tilde{N}(t,x) e^{-i\phi(t,x)},$$
and implies
$$\Re\tilde{N}(t,x)  \perp Im\tilde{N}(t,x).$$
From this together with \eqref{gas} it follows that
\begin{equation}\label{perpN}\Re N^{+\infty} \perp \Im N^{+\infty}.
\end{equation}
Using the orthogonality relation  \eqref{perpN} we have 
$$\left|\Re \left(e^{i\xi^2 t-ia^2\log 2|\xi|\sqrt{t}-i\frac \pi 4}N^{+\infty}\right)\right|=1,$$
and therefore
$$\underset{|\xi|\rightarrow \infty}{\lim}\left|\left|\int e^{-ix\xi}\,\Re\left(\frac{e^{-i\frac{x^2}{4t}}}{\sqrt t}a e^{-i\phi(t,x)}\,N^{+\infty} \right) dx\right|-2\sqrt{\pi}a\right|=0.$$
The three terms remaining to estimate in \eqref{That} are
$$\int e^{-ix\xi}\,\frac{e^{-i\frac{x^2}{4t}}}{\sqrt t} e^{-i\phi(t,x)}\,m(t,x)dx,\quad m(t,x)\in\{u\left(\frac 1t,\frac xt\right) ,g_N(t,x),u\left(\frac 1t,\frac xt\right) g_N(t,x)\}.$$
From Lemma \ref{oscint} i)-ii) these integrals tend to zero as $\xi$ goes to $-\infty$. So we have obtained\footnote{the same is valid with $\xi\rightarrow +\infty$ by working with $N^{-\infty}$ in \eqref{That}}
$$\underset{\xi\rightarrow -\infty}{\lim}|\widehat{T_x}(t,\xi)|=2\sqrt{\pi}a,$$
and in particular, in view of \eqref{estT0} we obtained the first inequality \eqref{Tdisc} of the Theorem:
$$\|\widehat{T_x}(t)\|_{L^\infty}> \|\widehat{T_x}(0)\|_{L^\infty}.$$

Concerning the normal vectors, similar computations can be done as follows. 
From system \eqref{TN0} with initial data \eqref{TN0id}  it yields that
$$\tilde N_x(0,x)=R(B_a^--B_a^+)\delta_0-\overline{\hat f_+(\frac x2)e^{-ia^2\log |x|}}\tilde N(0,x),$$
so we have  
$$\left|\widehat{\tilde N_x}(0,\xi)-R(B_a^--B_a^+)\right|\leq C\int|\hat f_+(\frac x2)| dx\leq C\|f_+\|_{H^1}.$$
Recalling that by symmetry of the profile of the self-similar solutions of \eqref{BF} the first coordinate of its normal and binormal vectors are odd, and that its second and third coordinates are even, we have 
$$|B_a^--B_a^+|=2|B_{a,1}|,$$
and we get
$$\left| |\widehat{\tilde N_x}(0,\xi)|-2|B_{a,1}|\right|\leq C\|f_+\|_{H^1}.$$
From the system \eqref{syst} on the first coordinate we obtain the conservation law $T_1(t,x)^2+|N_1(t,x)|^2=1$, so we also have $T_1(t,x)^2+|\tilde N_1(t,x)|^2=1$, and letting $x$ go to $+\infty$ in the case of $\chi_a$ we get that  $4|B_{a,1}|^2=4(1-A_{a,1}^2)=4(1-e^{-\pi a^2})$. As $4\pi a^2 >4(1-e^{-\pi a^2})$, if $\|f_+\|_{H^1}$ is small enough we obtain
\begin{equation}\label{estN0}2\sqrt{\pi}a> \|\widehat{\tilde N_x}(0)\|_{L^\infty}.\end{equation}
Again we shall get the lower bound $2\sqrt{\pi}a$ for $\|\widehat{N_x}(t)\|_{L^\infty}$ by looking at large frequencies.   From \eqref{syst} we have 
$$\widehat{N_x}(t,\xi)=-\widehat {\psi T}(t,\xi).$$
Recall from \cite{BV4} that 
$$\exists \lim_{x\rightarrow\pm\infty} T(t,x)=T^{\pm\infty}\in \mathbb S^2.$$
We can then write
\begin{equation}\label{Nhat}\widehat{N_x}(t,\xi)=-\int e^{-ix\xi}\,\frac{e^{i\frac{x^2}{4t}}}{\sqrt t}(a+ \overline{u})\left(\frac 1t,\frac xt\right) \left(T^{+\infty}-g_T(t,x)\right) dx,\end{equation}
with the function $g_T(t,x)$ defined by $g_T(t,x):=T^{+\infty}-T(t,x)$ satisfying
\begin{equation}\label{gasbis}g_T(t)\in L^\infty,\,g_T(t,x)\overset{x\rightarrow +\infty}{\longrightarrow}0.\end{equation}
Lemma \ref{oscint} insures us that
$$\underset{|\xi|\rightarrow \infty}{\lim}\left|\int e^{-ix\xi}\frac{e^{i\frac{x^2}{4t}}}{\sqrt t} aT^{+\infty}dx-2\sqrt{\pi}ae^{i\xi^2 t-i\frac \pi 4}T^{+\infty}\right|=0,$$
and that the three following terms are of order $o(\xi)$:
$$\int e^{-ix\xi}\,\frac{e^{i\frac{x^2}{4t}}}{\sqrt t}\,m(t,x)dx,\quad m(t,x)\in\{\overline{u}\left(\frac 1t,\frac xt\right) ,g_T(t,x),\overline{u}\left(\frac 1t,\frac xt\right) g_T(t,x)\}.$$
Therefore from \eqref{Nhat} we obtain 
$$\underset{\xi\rightarrow -\infty}{\lim}|\widehat{N_x}(t,\xi)|=2\sqrt{\pi} a,$$
and in view of \eqref{estN0} the inequality \eqref{Ndisc} follows. 

Summarizing, the proof of the theorem is achieved once Lemma \ref{oscint} is proved. 
\end{proof}

\begin{lemma}\label{oscint} Let $t\in(0,1)$. We estimate the oscillatory integrals
$$I_\xi=\int e^{-ix\xi}\,\frac{e^{-i\frac{x^2}{4t}}}{\sqrt t} e^{-ia^2\log\frac{ |x|}{\sqrt{t}}}m(t,x)dx,\quad\tilde I_\xi=\int e^{-ix\xi}\,\frac{e^{-i\frac{x^2}{4t}}}{\sqrt t} m(t,x)dx,$$
in the following cases. \\
i) If $m(t)\in H^3$ we have
\begin{equation}\label{limitgen}\underset{\xi\rightarrow -\infty}{\lim}I_\xi =0,
\end{equation}
\begin{equation}\label{limitgenbis}\underset{\xi\rightarrow -\infty}{\lim}\tilde I_\xi =0,
\end{equation}
Note that this assumption is satisfied by $m(t,x)=u\left(\frac 1t,\frac xt\right)$ if $u\in H^3$.\\
ii) The convergence \eqref{limitgen} is also valid for $m(t,x)=g_T(t,x)=\tilde N(t,x)-N^{+\infty}$, that is not in $H^1$, and also for $m(t,x)=u\left(\frac 1t,\frac xt\right)g_T(t,x)$. The convergence \eqref{limitgenbis} is also valid for $m(t,x)=g_T(t,x)=\tilde T(t,x)-T^{+\infty}$, and for $m(t,x)=u\left(\frac 1t,\frac xt\right)g_T(t,x)$.\\
iii) If $m(t,x)=1$, we have
\begin{equation}\label{limit}\underset{\xi\rightarrow -\infty}{\lim}\left|I_\xi-2e^{i\xi^2 t}e^{-ia^2\log 2|\xi|\sqrt{t}}\int e^{-is^2}ds\right|=0,\,\underset{\xi\rightarrow -\infty}{\lim}\left|\tilde I_\xi-2e^{i\xi^2 t}\int e^{-is^2}ds\right|=0.\end{equation}
\end{lemma}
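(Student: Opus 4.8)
The plan is to reduce every one of the four integrals to a Fresnel integral by completing the square in the Gaussian phase, and then to localise near the unique stationary point. Writing $-\tfrac{x^2}{4t}-x\xi=-\tfrac{(x+2t\xi)^2}{4t}+t\xi^2$ and substituting $s=\tfrac{x+2t\xi}{2\sqrt t}$ (so that $x=2\sqrt t\,s-2t\xi$ and $|x|/\sqrt t=2|s-\sqrt t\xi|$), I would rewrite
\[
\tilde I_\xi=2e^{it\xi^2}\int e^{-is^2}\,m\bigl(t,2\sqrt t\,s-2t\xi\bigr)\,ds,
\]
and likewise for $I_\xi$, now carrying the extra slowly varying factor $(2|s-\sqrt t\xi|)^{-ia^2}$. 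The gain is that the stationary point $x_0=-2t\xi$ of the original oscillation sits at $s=0$, and since $\xi\to-\infty$ it runs off to $x_0\to+\infty$. Case iii) is then essentially a computation: for $m\equiv1$ the identity above is exact and gives $2e^{it\xi^2}\int e^{-is^2}\,ds$, while for $I_\xi$ one factors out the value $(2\sqrt t|\xi|)^{-ia^2}=e^{-ia^2\log 2|\xi|\sqrt t}$ of the logarithmic amplitude at $s=0$, so that the principal term in \eqref{limit} appears and the remainder is $2e^{it\xi^2}\int e^{-is^2}\bigl[(2|s-\sqrt t\xi|)^{-ia^2}-(2\sqrt t|\xi|)^{-ia^2}\bigr]ds$, to be disposed of by the vanishing mechanism below.

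For case i) and for the product amplitudes $u\,g_T$ of case ii), the relevant $M(s):=m(t,2\sqrt t\,s-2t\xi)$ lies in $H^1_s$ with norm controlled by $\|m(t)\|_{H^1}$ up to powers of $t$: for $m\in H^3$ this is immediate, and for $m=u\,g_T$ the factor $u\in H^3\subset C_0$ supplies the $L^2$ decay that tames the merely bounded derivative of $g_T$. On a fixed window $|s|\le R$ the argument $2\sqrt t\,s-2t\xi\to+\infty$, so $M\to0$ uniformly by \eqref{gas}--\eqref{gasbis} and Sobolev embedding, and the window contributes $o(1)$; on the complement I would integrate by parts once using $e^{-is^2}=\tfrac{-1}{2is}\tfrac{d}{ds}e^{-is^2}$, whereby the boundary term is $O(R^{-1})$ and $\int_{|s|>R}e^{-is^2}\tfrac{d}{ds}\bigl(\tfrac{M}{2is}\bigr)ds$ is $O(R^{-1/2})$ by Cauchy--Schwarz using $M,M'\in L^2$. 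Letting first $R\to\infty$ and then $\xi\to-\infty$ yields \eqref{limitgen}--\eqref{limitgenbis}, and the same estimate kills the amplitude remainder left over from case iii).

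The step I expect to be the main obstacle is case ii) for the \emph{bare} amplitudes $g_N$ and $g_T$ alone. These are bounded and tend to $0$ only as $x\to+\infty$, while as $x\to-\infty$ they tend to the nonzero constants $N^{+\infty}-N^{-\infty}$, resp. $T^{+\infty}-T^{-\infty}$; worse, the frame system \eqref{syst} forces $\partial_x g_T=-\Re(\overline\psi N)$ and $\partial_x g_N=(\psi T-i\tfrac{a^2}{x}N)e^{i\Phi}$, of size $\sim|\psi|\sim t^{-1/2}$, so that $M'$ is neither in $L^2$ nor in $L^1$ and the soft tail estimate above breaks down. My proposal is to integrate by parts globally against the \emph{bounded} primitive $E(s)=\int_0^se^{-ir^2}dr$, which needs no truncation: the boundary term produces exactly a constant multiple of the limit values, namely $-E(-\infty)$ times the jump, while $\int E\,M'$ transports back to $\int E\bigl(\tfrac{y+2t\xi}{2\sqrt t}\bigr)\,\partial_y g(y)\,dy$.

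The delicate point is that this last integral cannot be evaluated by dominated convergence, since $\partial_y g\notin L^1$; one must instead exploit that $\partial_y g$ carries the conjugate Gaussian $e^{\mp iy^2/4t}$ coming from $\psi$. Splitting $E(\tfrac{y+2t\xi}{2\sqrt t})=E(-\infty)+[E(\tfrac{y+2t\xi}{2\sqrt t})-E(-\infty)]$, the first piece gives $E(-\infty)\int\partial_y g=E(-\infty)\cdot(-\text{jump})$ by the fundamental theorem of calculus as a convergent improper integral, exactly cancelling the boundary constant; the correction $[E(\cdots)-E(-\infty)]$ is supported, up to negligible error, in the receding region $y\gtrsim 2t|\xi|$, where the phase $-y^2/4t$ is non-stationary with derivative $\sim y/2t$ large, so a non-stationary-phase (van der Corput) estimate shows that the oscillatory integral of $\partial_y g$ against it vanishes as $\xi\to-\infty$. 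It is precisely this cancellation between the boundary constant and the transported Fresnel tail that makes the bare $g_N,g_T$ contributions vanish despite the lack of integrability; quantifying the oscillatory remainder will rely on the pointwise decay estimates for $u$ and for $g_T,g_N$ established in \cite{BV4}.
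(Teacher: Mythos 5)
Your overall strategy---complete the square so the stationary point $x_0=-2t\xi$ recedes to $+\infty$, reduce to $\int e^{-is^2}M(s)\,ds$ with $M(s)=m(t,2\sqrt t\,s-2t\xi)$, dispose of decaying amplitudes by one integration by parts, and attack the non-decaying $g_T,g_N$ by feeding in the ODE from \eqref{syst} so that the conjugate Gaussian phase of $\psi$ can be exploited---is structurally the same as the paper's, which reduces everything to $J_\eta=\int e^{-is^2}f(s+\eta)\,ds$ with $\eta=-\sqrt t\,\xi\to+\infty$ and then integrates by parts repeatedly, combining the phases $e^{-is^2}$ and $e^{\pm i(s+\eta)^2}$ into $e^{2is\eta}$ or $e^{i(s+2\eta)^2}$. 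Your use of the bounded primitive $E(s)=\int_0^s e^{-ir^2}dr$ to exhibit the exact cancellation between the boundary term and the non-decaying part of $g$ is an attractive repackaging, and your treatment of case iii), of case i) for $\tilde I_\xi$, and of the product amplitudes $u\,g$ is sound. However, two steps do not go through as written.

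First, the central claim in your treatment of the bare $g_N,g_T$---that $\int[E(s)-E(-\infty)]M'(s)\,ds$ is ``supported, up to negligible error, in the receding region $y\gtrsim 2t|\xi|$'' (i.e.\ $s\gtrsim 0$)---is unjustified and, taken literally, false. On $s<0$ you only have $|E(s)-E(-\infty)|\leq C\min(1,1/|s|)$ and $M'$ merely bounded, and $\int_{-\infty}^{-1}|s|^{-1}ds$ diverges, so this region is not negligible by size alone; worse, the conjugate Gaussian $e^{\pm iy^2/4t}=e^{\pm i(s+\eta)^2}$ carried by $M'$ is \emph{stationary} exactly at $y=0$, i.e.\ at $s=-\eta<0$, inside the region you discard, so non-stationary phase does not apply there. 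The argument is repairable (at $s=-\eta$ one has $|E(-\eta)-E(-\infty)|=O(1/\eta)$, and elsewhere one must genuinely run the stationary/non-stationary phase analysis, including the further integration by parts in which the derivative falls on $T$ or $N$ and regenerates Gaussian phases), but that analysis is precisely the hard content of the paper's estimate of $K_\eta$, not a remark. Second, you never excise a neighbourhood of $x=0$: the phase $e^{-ia^2\log|x|}$ in $I_\xi$ has derivative $-ia^2/x$, which is not square-integrable near $0$, so your $H^1$/Cauchy--Schwarz bound fails there, and the term $\tfrac{ia^2}{x}N$ in $g_N'$ is singular in the same way. The paper's cutoff near the origin (the piece $I_{\xi,1}$, which is $O(\log|\xi|/|\xi|)$ because the Gaussian phase is non-stationary there) is not optional, and it is missing from your argument.
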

\begin{proof}
Let us start with the integrals $I_\xi$. 
We shall first treat i) in a manner that will turn out to be valid also to the cases ii), up to  the estimate of a last oscillatory integral that will be treated case by case.  Finally we shall point out how the case iii) can be treated similarly to the case i).\\

First we get rid of the time by a change of variable:
$$I_\xi:=\int e^{-ix\xi}\,\frac{e^{-i\frac{x^2}{4t}}}{\sqrt t} e^{-ia^2\log\frac{ |x|}{\sqrt{t}}}m(t,x)dx=2\int e^{-i2\sqrt{t} y \xi}e^{-iy^2}e^{-ia^2\log 2|y|}m(t,2\sqrt{t}y)dy.$$

Now we notice that the part near the origin is negligible. More precisely, let $\chi$ be a smooth cuttoff such that $\chi(s)=0$ for $|s|\leq 1$ and $\chi(s)=1$ for $|s|\geq 2$. We split the integral into two corresponding pieces. We re-split the first one into two regions:
$$I_{\xi,1}:= 2\int_{|y|<\frac 1\xi} e^{-i2\sqrt{t} y \xi}e^{-iy^2}e^{-ia^2\log 2|y|}m(t,2\sqrt{t}y)(1-\chi(y))dy$$
$$+2\int_{\frac 1\xi\leq |y|\leq 2} e^{-i2\sqrt{t} y \xi}e^{-iy^2}e^{-ia^2\log 2|y|}m(t,2\sqrt{t}y)(1-\chi(y))dy.$$
On the first region we get convergence to zero as $|\xi|\rightarrow\infty$, with the rate of decay $\frac 1{|\xi|}$, just by using the fact that $m(t)\in L^\infty$. On the second region we integrate by parts from $e^{-i2\sqrt{t} y \xi}$ and the worse term gives us a rate of convergence to zero by $\frac{\log|\xi|}{|\xi|}$, provided that $m(t),m'(t)\in L^\infty$.\\

We are left with
$$I_{\xi,2}:=2e^{i\xi^2 t}\int e^{-i(y+\xi\sqrt{t})^2}e^{-ia^2\log 2|y|}m(t,\sqrt{t}y)\chi(y)dy$$
$$=2e^{i\xi^2 t}\int e^{-is^2}e^{-ia^2\log 2|s-\xi\sqrt{t}|}m(t,2\sqrt{t}(s-\xi\sqrt{t}))\chi(s-\xi\sqrt{t})dy,$$
so we have to show by replacing $-\sqrt{t}\xi$ by $\eta$ that
\begin{equation}\label{diffest}\underset{\eta\rightarrow +\infty}{\lim} J_\eta:=\underset{\eta\rightarrow +\infty}{\lim}\int e^{-is^2}f(s+\eta) ds=0,\end{equation}
for $f(r)=e^{-ia^2\log 2|r|}m(t,2\sqrt{t}r)\,\chi(r)$. 
We shall use frequently the fact that for  $|s|<\frac{|\eta|}2$ we have, as by hypothesis $m(t,x)\overset{x\rightarrow\infty}{\longrightarrow}0$, the decay
\begin{equation}\label{diffm}f(s+\eta)=o(\frac1{|\eta|})\end{equation}
We split the integral $J_\eta$ into two pieces, using again the localization $\chi$:
$$J_\eta=\int e^{-is^2}f(s+\eta) (1-\chi(s))ds+\int e^{-is^2}f(s+\eta)\chi(s)ds:=J_{\eta,1}+J_{\eta,2}.$$
The convergence of $J_{\eta,1}$ is insured by \eqref{diffm} and by the fact that the support of $1-\chi$ is bounded. For the $J_{\eta,2}$ we use integrations by parts:
$$J_{\eta,2}=\int e^{-is^2}f(s+\eta)\chi(s)ds=-\int e^{-is^2}\frac{i}{2s^2}f(s+\eta)\chi(s)ds$$
$$+\int e^{-is^2}\frac i{2s}f(s+\eta) \chi'(s)ds+\int e^{-is^2}\frac i{2s}f'(s+\eta) \chi(s)ds:=J_{\eta,2}^1+J_{\eta,2}^2+J_{\eta,2}^3.$$

The first integral $J_{\eta,2}^1$ restricted to $1<|s|<\frac{|\eta|}2$ satisfies the convergence to zero, by using again \eqref{diffm}. On its remaining part $|s|\geq \frac{|\eta|}2$ we get a $\frac 1{|\eta|}$ decay just by using $m(t)\in L^\infty$ and integrating $\frac 1{s^2}$. 

The second integral $J_{\eta,2}^2$ is restricted on $1<|s|<2$, so  it converges again thanks to \eqref{diffm}. 

The third integral $J_{\eta,2}^3$, which lives on $|s|>1, |s+\eta|>1$, can be upper-bounded by
$$\left|\int e^{-is^2}\frac i{2s}f'(s+\eta) \chi(s)ds\right|\leq C\int\left|\frac{\chi(s+\eta)\chi(s)}{s(s+\eta)} \right|ds+C\int\left|\frac{\chi'(s+\eta)\chi(s)}{s}\right|ds$$
$$+C\left|\int e^{-is^2}\frac{e^{-ia^2\log 2|s+\eta|}m'(t,2\sqrt{t}(s+\eta))\chi(s+\eta)\chi(s)}{s}ds\right|.$$

For the first integral, 
on the region $1<|s|<|\eta|-\sqrt{|\eta|}$ we get a $\frac{\log|\eta|}{\sqrt{|\eta|}}$ decay, on the region $|\eta|-\sqrt{|\eta|}<|s|<|\eta|+\sqrt{|\eta|}$ we get a $\frac{1}{\sqrt{|\eta|}}$ decay by using $ |s+\eta|>1$, and on the region $|\eta|+\sqrt{|\eta|}<|s|$ we get a $\frac{1}{|\eta|}$ decay. 

The second integral lives on $ 1<|s+\eta|<2$ so in particular $\frac 1{|s|}\leq \frac C{|\eta|}$ and we obtain a  convergence with a  $\frac 1{|\eta|}$ rate of decay. \\

So we are left with showing the convergence to zero of the third integral
$$K_\eta=\int e^{-is^2}\frac{e^{-ia^2\log 2|s+\eta|}m'(t,2\sqrt{t}(s+\eta))\chi(s+\eta)\chi(s)}{s}ds.$$

Note that until now all estimates, except the one on $I_{\xi,1}$ that uses also $m'\in L^\infty$, used only the properties
$$m(t)\in L^\infty, m(t,x)\overset{x\rightarrow +\infty}{\longrightarrow}0.$$
These conditions are satisfied also for the cases $m(t,x)=g_N(t,x)=\tilde N(t,x)-N^\infty$, and for $m(t,x)=u\left(\frac 1t,\frac xt\right)g_N(t,x)$. 
Estimating $I_{\xi,1}$ with $g_N(t,x)$ or with $u\left(\frac 1t,\frac xt\right)g(t,x)$ can be done the same as above, and the same rate of convergence to zero by $\frac{\log|\xi|}{|\xi|}$ is recovered by using the fact that $g_N'(t)=\tilde N'(t)=(-\psi(t) T(t)+\frac{ia^2}{s}N)e^{i\Phi}$ is integrable on $\frac 1\xi\leq |s|\leq 2$.\\

In order to end the proof of i), we perform in $K_\eta$ an extra integration by parts, use $ |s+\eta|>1$ and get as an upper-bound
$$|K_\eta|\leq C\int_{|s|>1}\frac{|m'(t,2\sqrt{t}(s+\eta))|+|m''(t,2\sqrt{t}(s+\eta))|}{s^2}ds.$$
On $1<|s|<\frac{|\eta|}2$ we use the decay hypothesis on $m$, and on $\frac{|\eta|}2<|s|$ we get a convergence of rate $\frac 1{|\eta|}$.\\

For estimating $K_\eta$ in the case $m(t,x)=g_N(t,x)$ of ii), since $g_N'(t)=\tilde N'(t)=(-\psi(t) T(t)+\frac{ia^2}{s}N)e^{i\Phi}$,  it is enough to estimate
$$\int e^{2is\eta}\frac{(a+\overline{u})\left(\frac 1t,\frac {2(s+\eta)}{\sqrt{t}}\right)T(t,2\sqrt{t}(s+\eta))\chi(s+\eta)\chi(s)}{s}ds$$
and
$$\int \left|\frac{\chi(s+\eta)\chi(s)}{s(s+\eta)}\right|ds.$$
The second term has been already treated. 
In the first term we perform an integration by parts and get an $\frac 1{|\eta|}$ rate of convergence easily up to when the derivative falls on $T$, which leads, as from \eqref{syst} we have $T_s=\Re \overline{\psi}N$, to
$$\frac 1\eta\int e^{2is\eta}\frac{e^{-ia^2\log 2|s+\eta|}(a+\overline{u})\left(\frac 1t,\frac {2(s+\eta)}{\sqrt{t}}\right)\Re \overline{\psi}N(t,2\sqrt{t}(s+\eta))\chi(s+\eta)\chi(s)}{s}ds$$
$$=\frac {e^{-i\eta^2}}{2\sqrt{t}\eta}\int e^{-is^2}\frac{e^{-ia^2\log 2|s+\eta|}|a+u|^2\left(\frac 1t,\frac {2(s+\eta)}{\sqrt{t}}\right)N(t,2\sqrt{t}(s+\eta))\chi(s+\eta)\chi(s)}{s}ds$$
$$+\frac {e^{-i3\eta^2}}{2\sqrt{t}\eta}\int e^{i(s+2\eta)^2}\frac{e^{-ia^2\log 2|s+\eta|}(a+\overline{u})\left(\frac 1t,\frac {2(s+\eta)}{\sqrt{t}}\right) \overline{N}(t,2\sqrt{t}(s+\eta))\chi(s+\eta)\chi(s)}{s}ds.$$
For obtaining the convergence to zero in $\eta$ for the first integral we redo an integration by parts from the quadratic phase. We do the same for the second integral, after removing first the region $s+2\eta\approx 0$ that gives a decay of type $\frac{\log\eta}{\eta}$.\\

Estimating $K_\eta$ in the case $m(t,x)=u\left(\frac 1t,\frac xt\right)g_N(t,x)$ of ii)  is a mix between the cases $m(t,x)=u\left(\frac 1t,\frac xt\right)$ and $m(t,x)=g_N(t,x)$ that do not cause new issues.\\

The proof of \eqref{limit} of the case $m=1$ in iii) goes the same as the proof of i) with $J_\eta$ replaced by
$$\int e^{-is^2}(f(s+\eta)-f(\eta)) ds,$$
and with \eqref{diffm} replaced by:
$$|f(s+\eta)-f(\eta)|\leq C\frac{|s|}{|\eta|}.$$
Indeed, this last inequality is valid as for  $|s|<\frac{|\eta|}2$ we have the existence of $|s_0|<\frac{|\eta|}2$ such that, by taking $|\eta|>4$,
$$|f(s+\eta)-f(\eta)|=|s||\frac{-ia^2}{2|s_0+\eta|}\chi(s_0+\eta)+\chi'(s_0+\eta)|\leq C \frac{|s|}{|s_0+\eta|}\leq C\frac{|s|}{|\eta|}.$$

Finally, the integrals $\tilde I_\xi$ can be treated similarly as $I_\xi$.

\end{proof}

\section{Appendix: computations on the linear momentum}

We consider here the  family of self-similar solutions $(\chi_a)_{a\in(0,\infty)}$ of the binormal flow \eqref{BF} described in \cite{GRV}. They are determined by their profiles $G_a$ in the sense that
$$\chi_a(t,x)=\sqrt{t}G_a(\frac{x}{\sqrt{t}}),$$
and their profiles satisfy the asymptotics
\begin{equation}\label{Gas}\underset{x\rightarrow\pm\infty}{\lim}G'_a(x)=A_a^\pm,\end{equation}
with $A^\pm$ unitary vectors determined in terms of the parameter $a$. From the binormal flow \eqref{BF} the following ordinary equation is obtained for $G_a$:
\begin{equation}\label{Gaeq}G_a(s)-sG'_a(s)=G'_a(s)\wedge G''_a(s).\end{equation}
Finally, this solution is extended to $t<0$ as
$$\chi_a(t,x)=\chi_a(-t,-x)\qquad t<0.$$
Now for $t>0$ we compute:
$$\int \chi_a(t,x)\wedge (\chi_a)_x(t,x) dx=\int \sqrt{t} G_a(\frac x{\sqrt{t}})\wedge G_a'(\frac x{\sqrt{t}}) dx=t\int G_a(s)\wedge G'_a(s) ds.$$
By using \eqref{Gaeq} we get
$$\int \chi_a(t,x)\wedge (\chi_a)_x(t,x) dx=t\int (G'_a(s)\wedge G''_a(s))\wedge G'_a(s) ds=t\int G_a''(s) ds,$$
as $G_a$ is parametrized by arclength and $G_a'(s)$ is orthogonal to $G_a''(s)$. Therefore \eqref{Gas} allows us to conlude the identity \eqref{impulse} for $t>0$:
$$\int \chi_a(t,x)\wedge (\chi_a)_x(t,x) dx=t(A^+-A^-).$$ 
Moreover, since $\chi(0,x)=xA_a^\pm$ for $x\in(0,\pm\infty)$, we obtain this identity also at time $t=0$:
$$\int \chi_a(0,x)\wedge (\chi_a)_x(0,x) dx=0.$$ 
As a consequence we get:
\begin{equation}\label{impulse}\int \chi_a(t,x)\wedge (\chi_a)_x(t,x) dx=|t|(A^+-A^-), \quad t\in \Bbb R.
\end{equation}

\begin{remark}
If is not obvious whether or not an identity similar to \eqref{impulse} remains true  for small perturbations of the self-similar solution $\chi_a$. Already some conditions have to be made for the initial datum in order the momentum 
$$\int \chi(t,x)\wedge \chi_x(t,x) dx$$
to be well-defined. For instance, at initial time $t_0$ we could impose as datum a solution $\chi(t_0,x)$ of the equation 
$$G(s)-sG'(s)=(1+\epsilon(s))G'(s)\wedge G''(s),$$
to be solved on $[0,\pm\infty)$ with same initial data $G(0)\in\mathbb R^3$ and $G'(0)\in\mathbb S^2$, 
for a small regular function $\epsilon$ decaying to 0 at $\pm\infty$. Note that by taking the exterior product with $G'$ we get $G\wedge G'=(1+\epsilon)G''$, so by taking the scalar product with $G'$ we get the conservation of $|G'(s)|^2$. Then, proceeding as for the self-similar profiles $G_a$, see \cite{GRV},  one can compute the curvature and torsion and to conclude that $G''(s)$ has limits $G^\pm$ at $\pm\infty$. In particular we can compute as above
$$\int \chi(t_0,x)\wedge \chi_x(t_0,x) dx=\int G(x)\wedge G'(x)dx=\int(1+\epsilon(x))(G'(s)\wedge G''(s))\wedge G'(x)dx$$
$$=\int(1+\epsilon(x))G''(x)dx=G^+-G^--\int\epsilon'(x)G'(x)dx,$$
which is a finite quantity if for instance $\epsilon'$ is integrable. Another question is to see if the momentum is still well-defined for $t\neq t_0$ and behaves as in \eqref{impulse}. We plan to address these questions in the future.
\end{remark}

{\bf{Acknowledgements:}}  This research is supported by the ANR project "SchEq" ANR-12-JS01-0005-01, by ERCEA Advanced Grant 2014 669689 - HADE, by the MINECO projects MTM2014-53850-P and SEV-2013-0323. and  by the Basque Government projects IT-641-13 and  BERC 2014-2017.

\end{document}